\renewcommand{\@seccntformat}[1]{{\csname the#1\endcsname}.\hspace{.5em}}
\newtheorem{thm}{Theorem}[section]
\newtheorem{conj}[thm]{Conjecture}
\newtheorem{lem}[thm]{Lemma}
\numberwithin{equation}{section}
\begin{document}

\renewcommand{\thefootnote}{*}

\begin{center}
{\Large\bf Some variations of a ``divergent" Ramanujan-type\\[5pt] $q$-supercongruence}
\end{center}

\vskip 2mm \centerline{Victor J. W. Guo  }
\begin{center}
{\footnotesize School of Mathematical Sciences, Huaiyin Normal University, Huai'an 223300, Jiangsu,\\
 People's Republic of China\\
{\tt jwguo@hytc.edu.cn } }
\end{center}

\vskip 0.7cm \noindent{\small{\bf Abstract.}} Using the $q$-Wilf--Zeilberger method and a $q$-analogue of a ``divergent" Ramanujan-type supercongruence,
we give several $q$-supercongruences modulo the fourth power of a cyclotomic polynomial. One of them is a $q$-analogue of
a supercongruence recently proved by Wang: for any prime $p>3$,
$$
\sum_{k=0}^{p-1} (3k-1)\frac{(\frac{1}{2})_k (-\frac{1}{2})_k^2 }{k!^3}4^k\equiv p-2p^3 \pmod{p^4},
$$
where $(a)_k=a(a+1)\cdots (a+k-1)$ is the Pochhammer symbol.

\vskip 3mm \noindent {\it Keywords}: central $q$-binomial coefficients; Wilf--Zeilberger method; $q$-WZ method; $q$-WZ pair; cyclotomic polynomials.

\vskip 3mm \noindent {\it 2010 Mathematics Subject Classifications}: 11B65 (Primary) 05A10, 05A30 (Secondary)

\section{Introduction}
By making use of the Wilf--Zeilberger (abbr. WZ) method \cite{WZ1,WZ2}, Guillera and Zudilin \cite{GuZu} established the following supercongruence:
for any odd prime $p$,
\begin{align}
\sum_{k=0}^{(p-1)/2} \frac{3k+1}{16^k}{2k\choose k}^3\equiv p\pmod{p^3}, \label{eq:div-1}
\end{align}
We can also sum $k$ in \eqref{eq:div-1} up to $p-1$, since
the $p$-adic order of $(\frac{1}{2})_k/k!$ is $1$ for $k$ in the range
$(p+1)/2\leqslant k\leqslant p-1$.
In the sprit of \cite{Zudilin}, the supercongruence \eqref{eq:div-1} corresponds to a divergent Ramanujan-type series for $1/\pi$:
\begin{align}
\sum_{k=0}^{\infty} \frac{3k+1}{16^k}{2k\choose k}^3\ \text{``="}\ \frac{-2i}{\pi}  \label{eq:div-2}
\end{align}
(see \cite[(47)]{GuZu}). Here the summation in \eqref{eq:div-2} must be understood as the analytic continuation of the corresponding hypergeometric series.

Still using the WZ method and the divisibility result: for $n>1$,
$$
2n{2n\choose n}\Bigg| \sum_{k=0}^{n-1} (3k+1){2k\choose k}^3 16^{n-k-1},
$$
which was conjectured by Z.-W. Sun \cite{Sun4} and confirmed by Mao and Zhang \cite{MZ}, B.Y. Sun \cite{SunBY} proved the following result:
for $n>1$,
\begin{equation}
2n{2n\choose n}\Bigg| \sum_{k=0}^{n-1} \frac{6k^4}{2k-1}{2k\choose k}^316^{n-k-1}.  \label{eq:sunby}
\end{equation}
Motivated by B.Y. Sun's work, we found the following supercongruence: for any prime $p>3$,
\begin{align}
\sum_{k=0}^{p-1} \frac{6k^4}{16^k(2k-1)}{2k\choose k}^3\equiv p+2p^3 \pmod{p^4}. \label{eq:guo-1}
\end{align}

We shall prove the supercongruence \eqref{eq:guo-1} by establishing its $q$-analogue. Recall that the {\it $q$-shifted factorial} is defined by $(a;q)_0=1$ and $(a;q)_n=(1-a)(1-aq)\cdots (1-aq^{n-1})$ for $n\geqslant 1$,
and the {\it $q$-integer} is defined as $[n]=[n]_q=(1-q^n)/(1-q)$ (see \cite{GR}). Moreover, the $n$-th {\it cyclotomic polynomial}  $\Phi_n(q)$ is given by
\begin{align*}
\Phi_n(q):=\prod_{\substack{1\leqslant k\leqslant n\\ \gcd(k,n)=1}}(q-\zeta^k),  
\end{align*}
where $\zeta$ is an $n$-th primitive root of unity.
Our $q$-analogue of \eqref{eq:guo-1} can be stated as follows.

\begin{thm}\label{thm:main-1}
Let $n>1$ be an odd integer. Then
\begin{align}
&\sum_{k=0}^{n-1}\frac{[3k][2k][k]^2}{[2k-1](-q;q)_k^4}{2k\brack k}^3 q^{-(k^2+3k)/2} \notag\\[5pt]
&\quad\equiv [n]q^{-(n+1)/2}+(1+q)[n]^3+\frac{(n^2-1)(1-q)^2}{24}[n]^3q^{-(n+1)/2}  \pmod{[n]\Phi_n(q)^3},  \label{eq:guo-q}
\end{align}
where ${2k\brack k}=(q;q)_{2k}/(q;q)_k^2$ denotes the central $q$-binomial coefficient.
\end{thm}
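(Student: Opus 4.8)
Write $S$ for the sum on the left-hand side of \eqref{eq:guo-q}. The plan is to split the modulus. Since $\Phi_n(q)\mid[n]$ and the complementary factor $[n]/\Phi_n(q)=\prod_{1<d\mid n,\,d<n}\Phi_d(q)$ is relatively prime to $\Phi_n(q)$, one has the factorization $[n]\Phi_n(q)^3=\Phi_n(q)^4\cdot\bigl([n]/\Phi_n(q)\bigr)$ into relatively prime polynomials. Because these two factors are coprime, a congruence modulo their product follows from the congruences modulo each, so it suffices to prove \eqref{eq:guo-q} modulo $\Phi_n(q)^4$ and modulo $[n]$ separately. As the right-hand side of \eqref{eq:guo-q} is manifestly a multiple of $[n]$, the second task reduces to the divisibility $[n]\mid S$, which is exactly the $q$-analogue of B.Y. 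Sun's \eqref{eq:sunby}.

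For $[n]\mid S$ I would proceed one cyclotomic factor at a time: for each divisor $d>1$ of $n$ it is enough to check that $S$ vanishes upon specializing $q$ to a primitive $d$-th root of unity $\zeta$. At such $\zeta$ the central $q$-binomial coefficient ${2k\brack k}$ annihilates all but a short block of indices $k$, and the surviving terms collapse through the telescoping supplied by the $q$-WZ companion described below. Establishing $S|_{q=\zeta}=0$ for every such $\zeta$ then yields $[n]\mid S$; this is the mechanism by which \eqref{eq:sunby} itself is proved.

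The core is the supercongruence modulo $\Phi_n(q)^4$. Here the $q$-WZ method enters: one seeks a companion function $G(k)$ and an explicit factor $c=c(q)$ so that the summand of $S$ minus $c$ times the summand of the \emph{divergent} Ramanujan-type $q$-series of the abstract is the forward difference $G(k+1)-G(k)$. Summing over $0\le k\le n-1$ telescopes, giving $S=c\,S_{\mathrm{div}}+G(n)-G(0)$, where $S_{\mathrm{div}}$ is the truncated divergent base sum. Feeding in the known evaluation of $S_{\mathrm{div}}$ modulo $\Phi_n(q)^4$ reduces the whole problem to expanding the boundary quantity $G(n)-G(0)$ to the required precision.

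The decisive and, I expect, most delicate step is this final evaluation of $G(n)-G(0)$ modulo $\Phi_n(q)^4$. One expands the $q$-shifted factorials and $q$-integers appearing in it to third order at the roots of $\Phi_n(q)$, writing $q^n=1+(q^n-1)$ and feeding in $q$-Wolstenholme-type harmonic sums; it is precisely the second-order harmonic term that produces the correction $\tfrac{(n^2-1)(1-q)^2}{24}[n]^3q^{-(n+1)/2}$, alongside the leading $[n]q^{-(n+1)/2}$ and the first-order $(1+q)[n]^3$. The main obstacle is the bookkeeping of all three contributions through a genuine third-order expansion rather than a leading-order one---in particular pinning down the constants $24$ and $n^2-1$. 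A useful consistency check is the limit $q\to1$, under which the right-hand side degenerates to $n+2n^3$, so that \eqref{eq:guo-q} collapses to the supercongruence \eqref{eq:guo-1}.
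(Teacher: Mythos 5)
Your overall architecture is the paper's: a $q$-WZ pair links the summand of $S$ to the summand of the truncated ``divergent'' sum, the difference telescopes, and the problem reduces to the known congruence \eqref{eq:q-div} plus one boundary term (in the paper's notation, $G(m,1)$, since $G(0,k)=0$). But your account of the ``decisive step'' contains a genuine error. The correction $\frac{(n^2-1)(1-q)^2}{24}[n]^3q^{-(n+1)/2}$ does \emph{not} arise from a third-order expansion of the boundary quantity with $q$-Wolstenholme harmonic sums; it is inherited verbatim from the quoted evaluation \eqref{eq:q-div} of $S_{\mathrm{div}}$. The boundary term carries an explicit factor $(q;q^2)_m^3/\bigl((1-q)^3(q;q)_{m-1}^3\bigr)$, which is already $\equiv[m]^3$ by the elementary Lemma 2.2, so it only needs to be evaluated to \emph{first} order in the cofactor: the paper shows $G(m,1)\equiv-(1+q)q[m]^3\pmod{[m]^3\Phi_m(q)}$ using nothing beyond \eqref{eq:q-Fermat} and \eqref{eq:mod-n}, and this supplies exactly the $(1+q)[n]^3$ piece and nothing else. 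Since you also say you will ``feed in the known evaluation of $S_{\mathrm{div}}$,'' extracting the $\frac{(n^2-1)(1-q)^2}{24}$ term a second time from the boundary would double-count it; and if instead you meant to derive it from scratch, you would be reproving \eqref{eq:q-div}, which is a separate hard theorem established by creative microscoping and is simply cited here.

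The second gap is your treatment of the factor $[n]$. The CRT split $[n]\Phi_n(q)^3=\Phi_n(q)^4\cdot\bigl([n]/\Phi_n(q)\bigr)$ is legitimate but unnecessary, and the part you delegate to it --- showing $S$ vanishes at primitive $d$-th roots of unity for proper divisors $d>1$ of $n$ --- is left as an unsubstantiated sketch (note that \eqref{eq:sunby} is an integer divisibility proved by entirely different means, not by root-of-unity specialization). The paper never needs this: \eqref{eq:q-div} already holds modulo $[m]\Phi_m(q)^3$, and the boundary congruence modulo $[m]^3\Phi_m(q)$ is stronger than a congruence modulo $[m]\Phi_m(q)^3$ because $\Phi_m(q)\mid[m]$, so the two ingredients combine directly to the full modulus with no case split. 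Two smaller omissions: you never exhibit the $q$-WZ pair (it is the pair \eqref{eq:fnk}--\eqref{eq:gnk} from \cite{Guo-div}, with $c=1$ up to a power of $q$), and after reindexing, the telescoped sum naturally runs up to $n=m$, so one must also check that the extra $m$-th summand is $\equiv0\pmod{[m]^4}$ before it can be discarded.
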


Here we say that two rational functions $A(q)$ and $B(q)$ are congruent
modulo a polynomial $P(q)$ if and only if $P(q)$ divides the numerator of the reduced form of $A(q)-B(q)$
in the polynomial ring $\mathbb{Z}[q]$.

It is easy to see that, letting $n=p>3$ be a prime and taking $q\to 1$ in \eqref{eq:guo-q}, we obtain the supercongruence \eqref{eq:guo-1}.
Furthermore, we can also deduce from \eqref{eq:guo-q} that, for any prime $p>3$ and $r\geqslant 2$,
\begin{align*}
\sum_{k=0}^{p^r-1} \frac{6k^4}{16^k(2k-1)}{2k\choose k}^3\equiv p^r \pmod{p^{r+3} }.
\end{align*}

Recently, via the WZ method and the summation package {\tt Sigma} \cite{Schneider}, Wang \cite{Wang} proved the following supercongruence:
for any prime $p>3$,
\begin{align}
\sum_{k=0}^{p-1} (3k-1)\frac{(\frac{1}{2})_k (-\frac{1}{2})_k^2 }{k!^3}4^k\equiv p-2p^3 \pmod{p^4}, \label{eq:wang}
\end{align}
where $(a)_k=a(a+1)\cdots (a+k-1)$ is the Pochhammer symbol.
This also extends a conjectural result of the author and Schlosser \cite[Conjecture 6.2]{GS}.

In this paper, we shall give a $q$-analogue of \eqref{eq:wang} as follows:

\begin{thm}\label{thm:main-2}
Let $n>1$ be an odd integer. Then
\begin{align}
&\sum_{k=0}^{n-1}[3k-1]\frac{(q;q^2)_k (q^{-1};q^2)_k^2}{(q;q)_k^2 (q^2;q^2)_k} q^{(3k-k^2)/2} \notag\\[5pt]
&\quad\equiv [n]q^{-(n+1)/2}-(1+q)[n]^3+\frac{(n^2-1)(1-q)^2}{24}[n]^3q^{-(n+1)/2}  \pmod{[n]\Phi_n(q)^3}. \label{eq:wang-q}
\end{align}
\end{thm}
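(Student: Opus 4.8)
The plan is to prove Theorem~\ref{thm:main-2} along the same lines as Theorem~\ref{thm:main-1}, after first exhibiting that both sums are built from a common ``divergent core.'' I would begin by clearing the half-integer $q$-shifted factorials: from $(q^{-1};q^2)_k=\frac{1-q^{-1}}{1-q^{2k-1}}(q;q^2)_k$, together with $\frac{(1-q^{-1})^2}{(1-q^{2k-1})^2}=\frac{q^{-2}}{[2k-1]^2}$ and $\frac{(q;q^2)_k^3}{(q;q)_k^2(q^2;q^2)_k}=\frac{{2k\brack k}^3}{(-q;q)_k^4}$, the summand of \eqref{eq:wang-q} collapses to
\[
[3k-1]\frac{(q;q^2)_k (q^{-1};q^2)_k^2}{(q;q)_k^2 (q^2;q^2)_k}\,q^{(3k-k^2)/2}
=\frac{[3k-1]}{[2k-1]^2(-q;q)_k^4}{2k\brack k}^3 q^{(3k-k^2)/2-2}.
\]
Since $(-q;q)_k^4\to 16^k$ as $q\to 1$, the term ${2k\brack k}^3/(-q;q)_k^4$ is exactly the $q$-analogue of $\binom{2k}{k}^3/16^k$ driving the divergent series \eqref{eq:div-2}, so the whole left-hand side of \eqref{eq:wang-q} is a rational-function combination of these core terms, the same object that underlies \eqref{eq:guo-q}.

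Next I would construct the requisite $q$-WZ pair. Working within a two-variable family $F(n,k)$ whose relevant specialization reproduces the summand displayed above, I would run $q$-creative telescoping to find a companion $G(n,k)$ satisfying
\[
F(n,k)-F(n-1,k)=G(n,k+1)-G(n,k).
\]
Summing over $0\leqslant k\leqslant n-1$ telescopes the right-hand side to $G(n,n)-G(n,0)$ and yields an identity expressing the target sum as a combination of the sum appearing in the $q$-analogue of \eqref{eq:div-1} and the boundary values. At that point a known $q$-analogue of the divergent supercongruence \eqref{eq:div-1} can be inserted for the core sum modulo $[n]\Phi_n(q)^3$, reducing the problem to an explicit evaluation of $G(n,0)$ and $G(n,n)$.

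The remaining work is the congruence bookkeeping modulo $[n]\Phi_n(q)^3$, which contains $\Phi_n(q)^4$. Because $[n]=\prod_{d\mid n,\,d>1}\Phi_d(q)$ with $[n]/\Phi_n(q)$ coprime to $\Phi_n(q)$, I would split the analysis into the $\Phi_n(q)$-adic part and the complementary part. The leading term $[n]q^{-(n+1)/2}$ comes straight from the divergent input; the two corrections $-(1+q)[n]^3$ and $\frac{(n^2-1)(1-q)^2}{24}[n]^3q^{-(n+1)/2}$ must be recovered from the boundary contribution $G(n,n)$ together with the second-order part of the input congruence. Here I would use $q^n\equiv 1$ and the standard second-order $q$-expansions around a primitive $n$-th root of unity (the source of the universal coefficient $(n^2-1)/24$) to match the right-hand side to the stated precision.

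I expect the genuine obstacle to be exactly this fourth-power precision: the congruence modulo $[n]\Phi_n(q)^2$ is essentially immediate, but isolating the correction carrying $(n^2-1)(1-q)^2/24$ forces one to track $G(n,n)$ and the input two orders deep in $\Phi_n(q)$, where several $O(\Phi_n(q)^3)$ pieces must be shown to cancel. Finally, the sign of the $(1+q)[n]^3$ term is opposite to its counterpart in \eqref{eq:guo-q}, which signals that the WZ certificate for \eqref{eq:wang-q}, though structurally analogous to that of Theorem~\ref{thm:main-1}, is not identical; I would therefore verify the displayed WZ relation as an independent rational-function identity rather than borrow the earlier certificate.
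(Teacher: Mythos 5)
Your overall strategy --- realize the summand as a shift inside a $q$-WZ pair, feed the known ``divergent'' $q$-supercongruence \eqref{eq:q-div} into the core sum, and absorb the discrepancy into boundary values of the certificate --- is exactly the paper's, and your algebraic reduction of the summand to $\frac{[3k-1]}{[2k-1]^2(-q;q)_k^4}{2k\brack k}^3q^{(3k-k^2)/2-2}$ is correct (though the paper never needs it). But two points in your plan would derail an execution. First, the telescoping is set up in the wrong direction: the paper's pair \eqref{eq:fnk}--\eqref{eq:gnk} satisfies $F(n,k-1)-F(n,k)=G(n+1,k)-G(n,k)$, and one sums over the \emph{first} argument $n$ from $0$ to $m-1$ with $k$ held fixed, so that \eqref{eq:fnk-gn0-00} yields $\sum_n F(n,-1)=\sum_n F(n,0)+G(m,0)$ with a \emph{single} boundary term; the target summand is $F(j,-1)$ as a function of its first argument $j$, and the core sum of \eqref{eq:q-div} is $\sum_j F(j,0)$. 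Your proposed summation over $k$ from $0$ to $n-1$, producing $G(n,n)-G(n,0)$, instead compares $\sum_k F(n,k)$ with $\sum_k F(n-1,k)$ and does not connect the target sum to the core sum at all. Second, your inference from the sign flip is backwards: no new certificate is needed. Theorems \ref{thm:main-1} and \ref{thm:main-2} use the \emph{same} pair, the former by stepping $k$ from $0$ to $1$ (which subtracts $G(m,1)$) and the latter by stepping from $0$ to $-1$ (which adds $G(m,0)$); the opposite signs of the $(1+q)[n]^3$ terms come from that, not from a different WZ relation.

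The genuine gap is the evaluation you defer to ``bookkeeping'': everything reduces to showing $G(m,0)\equiv-(1+q^{-1})q^2[m]^3=-(1+q)q[m]^3\pmod{[m]^3\Phi_m(q)}$, and this is where the one new ingredient enters. Since $G(m,0)$ carries the factor $(q;q^2)_{m-1}^2$ rather than $(q^{2k+1};q^2)_{m-1}^2$ with $k\geqslant1$, you cannot reuse the manipulation that handles $G(m,1)$; you need the congruence $(q;q^2)_{m-1}/(q;q)_{m-1}\equiv-[m]q\pmod{[m]\Phi_m(q)}$ (the paper's \eqref{eq:mod-n-new}, deduced from \eqref{eq:mod-n} via $[2m-1]\equiv-q^{-1}\pmod{\Phi_m(q)}$), applied to each of the two factors, together with $(q;q^2)_m/((1-q)(q;q)_{m-1})\equiv[m]$ and $(-q;q)_{m-1}\equiv1$. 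Note also that no second-order expansion around a root of unity is required anywhere: since $[m]^3$ is already divisible by $[m]\Phi_m(q)^2$, the cofactor of $[m]^3$ in the boundary term only matters modulo $\Phi_m(q)$, and the entire $(n^2-1)(1-q)^2/24$ correction is inherited verbatim from \eqref{eq:q-div} rather than reconstructed. Once $G(m,0)$ is pinned down, the proof is the one-line substitution of \eqref{eq:reslut-2} and \eqref{eq:reslut-5} into \eqref{eq:reslut-4} followed by division by $q$.
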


As before, letting $n=p>3$ be a prime and taking $q\to 1$ in \eqref{eq:wang-q}, we are led to \eqref{eq:guo-1}.
Moreover, it follows from \eqref{eq:guo-q} that, for any prime $p>3$ and $r\geqslant 2$,
\begin{align*}
\sum_{k=0}^{p^r-1} (3k-1)\frac{(\frac{1}{2})_k (-\frac{1}{2})_k^2 }{k!^3}4^k \equiv p^r \pmod{p^{r+3} }.
\end{align*}

We shall prove Theorems \ref{thm:main-1} and \ref{thm:main-2} by making use of the $q$-WZ method \cite{WZ1,WZ2} and the following
$q$-supercongruence: for odd $n$,
\begin{align}
&\sum_{k=0}^{n-1}[3k+1]\frac{(q;q^2)_k^3 q^{-{k+1\choose 2} } }{(q;q)_k^2 (q^2;q^2)_k} \notag\\[5pt]
&\qquad
\equiv q^{(1-n)/2}[n]+\frac{(n^2-1)(1-q)^2}{24}q^{(1-n)/2}[n]^3 \pmod{[n]\Phi_n(q)^3}.  \label{eq:q-div}
\end{align}
This $q$-supercongruence was originally conjectured in \cite{Guo-div} and recently proved in \cite{Guo-mod4}
with the help of the ``creative microscoping" method \cite{GuoZu}
and the Chinese reminder theorem. It is easy to see that \eqref{eq:div-1} follows from \eqref{eq:q-div} by taking
$n=p$ and $q\to 1$. The $n=p^r$ being an odd prime power and $q\to 1$ case of \eqref{eq:q-div} was conjectured by Z.-W. Sun \cite{Sun4}.
We point out that some other interesting $q$-supercongruences were given in
\cite{GG,Guo-aam,Guo-jmaa,Guo-a2,Guo-div,Guo-mod4,GL18,GS0,GS,GuoZu3,LP,NP,Straub,WN,WY0,WY,Zu19}.

The paper is organized as follows. We first give two lemmas in the next section.
The proofs of Theorems \ref{thm:main-1} and \ref{thm:main-2} will be given in Sections 3 and 4, respectively.
Two more similar $q$-supercongruences are given in Section 5. Finally, we propose a related
open problem in Section 6.

\section{Two lemmas}
In this section we give three simple $q$-congruences.
The first one may be deemed a $q$-analogue of Fermat's little theorem $2^{p-1}\equiv 1\pmod{p}$
for any odd prime $p$. The third one is a generalization of a recent result of Wang and Ni \cite[Lemma 2.2]{WN}.

\begin{lem}Let $n>1$ be an odd integer. Then
\begin{align}
(-q;q)_{n-1}\equiv 1\pmod{\Phi_n(q)}.  \label{eq:q-Fermat}
\end{align}
\end{lem}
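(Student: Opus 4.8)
The plan is to reduce the product $(-q;q)_{n-1}=\prod_{j=1}^{n-1}(1+q^j)$ to a ratio of two $q$-shifted factorials, and then to exploit that $n$ is odd in the form $\gcd(2,n)=1$ — this is precisely the ingredient playing the role of $p$ being prime in the classical congruence $2^{p-1}\equiv 1\pmod p$.

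First I would use the elementary identity $1+q^j=(1-q^{2j})/(1-q^j)$ for each $j$, which collapses the whole product into
$$
(-q;q)_{n-1}=\prod_{j=1}^{n-1}\frac{1-q^{2j}}{1-q^j}=\frac{(q^2;q^2)_{n-1}}{(q;q)_{n-1}}.
$$
Since I ultimately want a congruence modulo $\Phi_n(q)$, I would first record that $(q;q)_{n-1}=\prod_{j=1}^{n-1}(1-q^j)$ is coprime to $\Phi_n(q)$: indeed $1-q^j=\prod_{d\mid j}\Phi_d(q)$, and $\Phi_n$ can occur among these factors only when $n\mid j$, which is impossible for $1\le j\le n-1$. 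Hence $(q;q)_{n-1}$ is invertible modulo $\Phi_n(q)$, so it is legitimate to treat the displayed quotient as a congruence.

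Next I would pass to exponents modulo $n$. Because $\Phi_n(q)\mid 1-q^n$, we have $q^n\equiv 1\pmod{\Phi_n(q)}$, whence $q^{2j}\equiv q^{2j\bmod n}$ for every $j$. The key point is that, $n$ being odd, $2$ is a unit modulo $n$, so the map $j\mapsto 2j\bmod n$ permutes $\{1,2,\dots,n-1\}$. Consequently
$$
(q^2;q^2)_{n-1}=\prod_{j=1}^{n-1}(1-q^{2j})\equiv\prod_{j=1}^{n-1}(1-q^{2j\bmod n})=\prod_{m=1}^{n-1}(1-q^m)=(q;q)_{n-1}\pmod{\Phi_n(q)}.
$$
Dividing by the invertible factor $(q;q)_{n-1}$ then yields $(-q;q)_{n-1}\equiv 1\pmod{\Phi_n(q)}$, as claimed.

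I expect the only genuinely delicate step to be the bookkeeping around the division — that is, making the invertibility of $(q;q)_{n-1}$ modulo $\Phi_n(q)$ explicit, so that reducing the numerator really does control the quotient. Everything else (the telescoping and the permutation of residues) is routine. As a sanity check and an alternative route, one may instead substitute a primitive $n$-th root of unity $\zeta$ directly: evaluating $\prod_{j=0}^{n-1}(x-\zeta^j)=x^n-1$ at $x=-1$ gives $\prod_{j=0}^{n-1}(1+\zeta^j)=2$ when $n$ is odd, and peeling off the $j=0$ factor $1+\zeta^0=2$ leaves $\prod_{j=1}^{n-1}(1+\zeta^j)=1$; since this holds for every root of the separable polynomial $\Phi_n(q)$, the divisibility $\Phi_n(q)\mid (-q;q)_{n-1}-1$ follows.
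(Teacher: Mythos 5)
Your main argument is correct, but it takes a genuinely different route from the paper. You prove the congruence ``arithmetically'': writing $(-q;q)_{n-1}=(q^2;q^2)_{n-1}/(q;q)_{n-1}$, observing that $(q;q)_{n-1}$ is a unit modulo $\Phi_n(q)$, and then using the fact that $2$ is invertible modulo the odd integer $n$, so that $j\mapsto 2j\bmod n$ permutes $\{1,\dots,n-1\}$ and hence $(q^2;q^2)_{n-1}\equiv(q;q)_{n-1}\pmod{\Phi_n(q)}$. This is a faithful $q$-lift of the standard proof of Fermat's little theorem for the base $2$, and it makes transparent exactly where the hypothesis that $n$ is odd enters; the only minor blemish is the sign in $1-q^j=-\prod_{d\mid j}\Phi_d(q)$, which is harmless for the coprimality claim. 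The paper instead evaluates the factorization $(x^n-1)/(x-1)=\prod_{k=1}^{n-1}(x-\zeta^k)$ at $x=-1$ for a primitive $n$-th root of unity $\zeta$, getting $(-\zeta;\zeta)_{n-1}=1$ directly in two lines --- which is essentially the ``alternative route'' you sketch at the end of your write-up (the paper's version of the identity already omits the $j=0$ factor, so no peeling is needed). The paper's argument is shorter and avoids any discussion of invertibility modulo $\Phi_n(q)$; yours buys a clearer conceptual link to the classical congruence $2^{p-1}\equiv 1\pmod p$ and generalizes immediately to $(q^a;q^a)$-type variants for any $a$ coprime to $n$. Either proof is acceptable.
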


\begin{proof}
 It is well known that
\begin{align}
\frac{x^n-1}{x-1}=\prod_{k=1}^{n-1}(x-\zeta^k),  \label{eq:fracxp-1}
\end{align}
where $\zeta$ is an $n$-th primitive root of unity. Letting $x=-1$ in \eqref{eq:fracxp-1}, we get
$(-\zeta;\zeta)_{n-1}=1$, which is equivalent to \eqref{eq:q-Fermat}.
\end{proof}

\begin{lem}Let $n>1$ be an odd integer. Then
\begin{align}
\frac{(q;q^2)_{n}}{(1-q)(q;q)_{n-1}} &\equiv [n]\pmod{[n]\Phi_n(q)}, \label{eq:mod-n} \\[5pt]
\frac{(q;q^2)_{n-1}}{(q;q)_{n-1}} &\equiv -[n]q\pmod{[n]\Phi_n(q)}. \label{eq:mod-n-new}
\end{align}
\end{lem}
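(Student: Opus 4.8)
The plan is to treat the two congruences in parallel, since they share the same mechanism. The starting observation is that for odd $n\ge 3$ the index $j=(n-1)/2$ produces the factor $1-q^{2j+1}=1-q^n=(1-q)[n]$ inside each of the odd $q$-shifted factorials $(q;q^2)_n$ and $(q;q^2)_{n-1}$. Writing $(q;q^2)_n=(1-q^n)S$ with $S=\prod_{0\le j\le n-1,\,j\ne(n-1)/2}(1-q^{2j+1})$ and $(q;q^2)_{n-1}=(1-q^n)T$ with $T=\prod_{0\le j\le n-2,\,j\ne(n-1)/2}(1-q^{2j+1})$, the left-hand side of \eqref{eq:mod-n} becomes $[n]S/(q;q)_{n-1}$ and that of \eqref{eq:mod-n-new} becomes $[n](1-q)T/(q;q)_{n-1}$. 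Hence \eqref{eq:mod-n} is equivalent to $[n]\bigl(S-(q;q)_{n-1}\bigr)/(q;q)_{n-1}\equiv 0$ and \eqref{eq:mod-n-new} to $[n]\bigl((1-q)T+q(q;q)_{n-1}\bigr)/(q;q)_{n-1}\equiv 0$, both modulo $[n]\Phi_n(q)$.

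First I would check that the two numerator polynomials are divisible by $\Phi_n(q)$. Working modulo $\Phi_n(q)$, where $q$ is a primitive $n$-th root of unity so $q^n=1$, the map $j\mapsto (2j+1)\bmod n$ is a bijection of $\mathbb{Z}/n$; deleting $j=(n-1)/2$ (which is sent to $0$) shows the exponents occurring in $S$ run through all nonzero residues exactly once, so $S\equiv\prod_{r=1}^{n-1}(1-q^r)=(q;q)_{n-1}$ and thus $\Phi_n(q)\mid S-(q;q)_{n-1}$. For \eqref{eq:mod-n-new}, deleting also $j=n-1$ (which is sent to $n-1$) gives $T\equiv(q;q)_{n-2}$, whence $(1-q)T+q(q;q)_{n-1}\equiv(q;q)_{n-2}\bigl[(1-q)+q(1-q^{n-1})\bigr]=(q;q)_{n-2}(1-q^n)\equiv 0\pmod{\Phi_n(q)}$.

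It then remains to upgrade these from modulus $\Phi_n(q)$ to modulus $[n]\Phi_n(q)=\Phi_n(q)^2\prod_{d\mid n,\,1<d<n}\Phi_d(q)$. For this I would isolate the principle: if $A,B\in\mathbb{Z}[q]$ with $B$ coprime to $\Phi_n(q)$, $\Phi_n(q)\mid A$, and $v_{\Phi_d}(A)\ge v_{\Phi_d}(B)$ for every divisor $d$ of $n$ with $1<d<n$ (writing $v_{\Phi_d}$ for the multiplicity of $\Phi_d(q)$), then $[n]A/B\equiv 0\pmod{[n]\Phi_n(q)}$. This follows by comparing $\Phi_d$-adic valuations of the reduced numerator factor by factor: at $\Phi_n$ the numerator $[n]A$ has valuation $\ge 2$ while $B$ contributes none, and at each proper $\Phi_d$ the numerator's valuation $1+v_{\Phi_d}(A)$ strictly exceeds $v_{\Phi_d}(B)$, leaving valuation $\ge 1$.

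The hard part is verifying the hypothesis $v_{\Phi_d}(A)\ge v_{\Phi_d}(B)$, i.e.\ counting how many factors of $S$, $T$, and $(q;q)_{n-1}$ are divisible by $\Phi_d(q)$ for a proper divisor $d$. Here a short count of (odd) multiples of $d$ does the job: since $n/d$ is odd and $\ge 3$, one finds for $1<d<n$ that $v_{\Phi_d}(S)=v_{\Phi_d}(T)=v_{\Phi_d}((q;q)_{n-1})=n/d-1$. Consequently $v_{\Phi_d}\bigl(S-(q;q)_{n-1}\bigr)\ge n/d-1$ and $v_{\Phi_d}\bigl((1-q)T+q(q;q)_{n-1}\bigr)\ge n/d-1$, because in each case both summands carry the common valuation $n/d-1$. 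Noting finally that $(q;q)_{n-1}$ is coprime to $\Phi_n(q)$, the principle applies with $A=S-(q;q)_{n-1}$ and $A=(1-q)T+q(q;q)_{n-1}$ respectively, yielding \eqref{eq:mod-n} and \eqref{eq:mod-n-new}. I expect this valuation bookkeeping at the intermediate divisors to be the only delicate point; everything else is a direct computation.
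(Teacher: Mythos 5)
Your proof is correct, and it diverges from the paper's in the way it lifts the congruences from modulus $\Phi_n(q)$ to modulus $[n]\Phi_n(q)$. The core identification modulo $\Phi_n(q)$ is the same in both arguments: the paper writes $(q;q^2)_n/(q;q)_n$ as $(q;q^2)_{(n-1)/2}(q^{n+2};q^2)_{(n-1)/2}/(q;q)_{n-1}$ and uses $q^n\equiv 1$ to match the numerator with $(q;q)_{n-1}$, which is exactly your bijection $j\mapsto 2j+1 \bmod n$ showing $S\equiv (q;q)_{n-1}\pmod{\Phi_n(q)}$. Where you differ is afterwards. The paper first rewrites the left-hand side of \eqref{eq:mod-n} as $[n]{2n\brack n}/(-q;q)_n$; since ${2n\brack n}$ is a genuine polynomial and $(-q;q)_n$ is coprime to $[n]\Phi_n(q)$ for odd $n$, the difference $[n]\bigl({2n\brack n}-(-q;q)_n\bigr)/(-q;q)_n$ has a reduced numerator trivially divisible by $[n]\Phi_n(q)$, with no bookkeeping at the intermediate cyclotomic factors. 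You instead keep the denominator $(q;q)_{n-1}$, which does contain $\Phi_d(q)$ for every proper divisor $d>1$ of $n$, and so you must verify $v_{\Phi_d}(S)=v_{\Phi_d}(T)=v_{\Phi_d}((q;q)_{n-1})=n/d-1$; your counts of odd multiples of $d$ are right (using that $d$ and $n/d$ are odd), and your general valuation principle is sound, but this is extra work the paper's reformulation avoids. The other structural difference is that the paper deduces \eqref{eq:mod-n-new} from \eqref{eq:mod-n} by dividing by $[2n-1]\equiv -q^{-1}\pmod{\Phi_n(q)}$, which is coprime to $[n]$, whereas you prove the two congruences independently in parallel; both routes are valid, the paper's being slightly shorter and yours being more self-contained and mechanical.
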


\begin{proof}It is easy to see that
\begin{align}
\frac{(q;q^2)_{n}}{(1-q)(q;q)_{n-1}}
=[n]{2n\brack n}\frac{1}{(-q;q)_n},  \label{eq:mod-n-1}
\end{align}
and
\begin{align}
{2n\brack n}\frac{1}{(-q;q)_n}&=\frac{(q;q^2)_{(n-1)/2}(q^{n+2};q^2)_{(n-1)/2}}{(q;q)_{n-1}}  \notag \\[5pt]
&\equiv \frac{(q;q^2)_{(n-1)/2}(q^{2};q^2)_{(n-1)/2}}{(q;q)_{n-1}}=1\pmod{\Phi_n(q)}  \label{eq:mod-n-2}
\end{align}
in view of $q^n\equiv 1\pmod{\Phi_n(q)}$.
Since ${2n\brack n}$ is a polynomial in $q$ and $[n]$ is relatively prime to $(-q;q)_{n}$ for odd $n$,
the $q$-congruence \eqref{eq:mod-n} immediately follows from  \eqref{eq:mod-n-1} and \eqref{eq:mod-n-2}.

Observing that
\begin{align*}
\frac{(q;q^2)_{n-1}}{(q;q)_{n-1}}
&=\frac{1}{[2n-1]}\frac{(q;q^2)_{n}}{(1-q)(q;q)_{n-1}}, \\[5pt]
[2n-1]&=(1-q^{2n-1})/(1-q)\equiv -q^{-1}\pmod{\Phi_n(q)},
\end{align*}
and $[2n-1]$ and $[n]$ are relatively prime polynomials in $q$, we deduce \eqref{eq:mod-n-new} from \eqref{eq:mod-n}.
\end{proof}

\section{Proof of Theorem \ref{thm:main-1}}
Define two functions $F(n,k)$ and $G(n,k)$ as follows:
\begin{align}
F(n,k) &=[3n+2k+1]\frac{(q;q^2)_{n}(q^{2k+1};q^2)_{n}^2 q^{-{n+1\choose 2}-(2n+1)k} }{(q;q)_{n}^2 (q^2;q^2)_{n}}, \label{eq:fnk}\\[5pt]
G(n,k) &=-\frac{(1+q^{n+2k-1})(q;q^2)_{n}(q^{2k+1};q^2)_{n-1}^2 q^{-{n\choose 2}-(2n-1)k} }{(1-q)(q;q)_{n-1}^2 (q^2;q^2)_{n-1}}, \label{eq:gnk}
\end{align}
where we have assumed that $1/(q^2;q^2)_{n}=0$ for any negative integer $n$.
It is easy to check that
\begin{align}
F(n,k-1)-F(n,k)=G(n+1,k)-G(n,k).  \label{eq:fnk-gnk}
\end{align}
That is, the functions $F(n,k)$ and $G(n,k)$ form a $q$-WZ pair.

We now let $m>1$ be an odd integer.
Summing \eqref{eq:fnk-gnk} over $n$ from $0$ to $m-1$, we obtain
\begin{align}
\sum_{n=0}^{m-1}F(n,k-1)-\sum_{n=0}^{m-1}F(n,k)=G(m,k)-G(0,k)=G(m,k).  \label{eq:fnk-gn0-00}
\end{align}
In light of \eqref{eq:q-Fermat} and \eqref{eq:mod-n}, we have
\begin{align}
G(m,1) &=-\frac{(1+q^{m+1})(q;q^2)_{m}(q^3;q^2)_{m-1}^2 q^{-{m\choose 2}-2m+1} }{(1-q)(q;q)_{m-1}^3 (-q;q)_{m-1}}  \notag\\[5pt]
&=-\frac{(1+q^{m+1})(q;q^2)_{m}^3 q^{-{m\choose 2}-2m+1} }{(1-q)^3(q;q)_{m-1}^3 (-q;q)_{m-1}}   \notag \\[5pt]
&\equiv -(1+q)q[m]^3 \pmod{[m]^3\Phi_m(q)},  \label{eq:mod-nnnphi}
\end{align}
since $q^m\equiv 1\pmod{\Phi_m(q)}$. Combining \eqref{eq:fnk-gn0-00} and \eqref{eq:mod-nnnphi},
we conclude that
\begin{align}
\sum_{n=0}^{m-1}F(n,1)\equiv \sum_{n=0}^{m-1}F(n,0)+(1+q)q[m]^3  \pmod{[m]^3\Phi_m(q)}.  \label{eq:reslut-0}
\end{align}
It is easy to see that
\begin{align}
\sum_{n=0}^{m-1}F(n,1)
&=\sum_{n=0}^{m-1}
[3n+3]\frac{(q;q^2)_{n}(q^3;q^2)_{n}^2 q^{-{n+1\choose 2}-(2n+1)} }{(q;q)_{n}^2 (q^2;q^2)_{n}} \notag\\[5pt]
&=\sum_{n=1}^{m}
[3n]\frac{(q;q^2)_{n-1}(q^3;q^2)_{n-1}^2 q^{-{n\choose 2}-(2n-1)} }{(q;q)_{n-1}^2 (q^2;q^2)_{n-1}} \notag \\[5pt]
&=\sum_{n=1}^{m}\frac{[3n][2n][n]^2}{[2n-1](-q;q)_n^4}{2n\brack n}^3 q^{-(n^2+3n)/2+1}.  \label{eq:reslut-1}
\end{align}
On the other hand, by \eqref{eq:q-div} we have
\begin{align}
\sum_{n=0}^{m-1}F(n,0)
&=\sum_{n=0}^{m-1}[3n+1]\frac{(q;q^2)_n^3 q^{-{n+1\choose 2} } }{(q;q)_n^2 (q^2;q^2)_n}\notag \\[5pt]
&\equiv q^{(1-m)/2}[m]+\frac{(m^2-1)(1-q)^2}{24}q^{(1-m)/2}[m]^3 \pmod{[m]\Phi_m(q)^3}. \label{eq:reslut-2}
\end{align}
Substituting \eqref{eq:reslut-1} and  \eqref{eq:reslut-2} into \eqref{eq:reslut-0}, and noticing the $m$-th summand on the right-hand side of \eqref{eq:reslut-1} is
congruent to $0$ modulo $[m]^4$, we are led to \eqref{eq:guo-q} with $n\mapsto m$ differing only by a factor $q$.

\medskip
\noindent{\it Remark.} Usually the basic hypergeometric functions satisfying the condition
\begin{align*}
F(n,k+1)-F(n,k)=G(n+1,k)-G(n,k)
\end{align*}
are called a $q$-WZ pair. It is also reasonable to call the basic hypergeometric functions satisfying \eqref{eq:fnk-gnk} a $q$-WZ pair (see Zudilin \cite{Zudilin}).
The $q$-WZ pair in the proof of Theorem~\ref{thm:main-1} was found by the author \cite{Guo-div} in his proof
of a weaker form of \eqref{eq:q-div} modulo $[n]\Phi_n(q)^2$.
But the corresponding WZ pair (the limiting case $q\to 1$) was first given by Guillera and Zudilin \cite{GuZu}
in their proof of \eqref{eq:div-1}.

\section{Proof of Theorem \ref{thm:main-2}}

Let the functions $F(n,k)$ and $G(n,k)$ be given by \eqref{eq:fnk} and \eqref{eq:gnk}, respectively.
Again, let $m$ be an odd integer greater than $1$.
In view of \eqref{eq:q-Fermat}, \eqref{eq:mod-n}, \eqref{eq:mod-n-new} and $q^m\equiv 1\pmod{\Phi_m(q)}$, we have
\begin{align}
G(m,0) &=-\frac{(1+q^{m-1})(q;q^2)_{m}(q;q^2)_{m-1}^2 q^{-{m\choose 2}} }{(1-q)(q;q)_{m-1}^3 (-q;q)_{m-1}}  \notag\\[5pt]
&\equiv -(1+q^{-1})q^2[m]^3 \pmod{[m]^3\Phi_m(q)}.  \label{eq:mod-nnnphi-2}
\end{align}
It follows from \eqref{eq:fnk-gn0-00} and \eqref{eq:mod-nnnphi-2} that
\begin{align}
\sum_{n=0}^{m-1}F(n,-1)\equiv \sum_{n=0}^{m-1}F(n,0)-(1+q)q[m]^3  \pmod{[m]^3\Phi_m(q)}.  \label{eq:reslut-4}
\end{align}
By the definition of \eqref{eq:fnk}, we have
\begin{align}
\sum_{n=0}^{m-1}F(n,-1)
&=\sum_{n=0}^{m-1}
[3n-1]\frac{(q;q^2)_{n}(q^{-1};q^2)_{n}^2 q^{-{n+1\choose 2}+(2n+1)} }{(q;q)_{n}^2 (q^2;q^2)_{n}}.   \label{eq:reslut-5}
\end{align}
Finally, substituting \eqref{eq:reslut-2} and \eqref{eq:reslut-5} into \eqref{eq:reslut-4}, and dividing both sides by $q$,
we arrive at \eqref{eq:guo-q} with $n\mapsto m$.

\section{More similar $q$-supercongruences}
From \eqref{eq:q-div} and \eqref{eq:fnk-gn0-00} we can deduce more $q$-supercongruences  besides \eqref{eq:guo-q} and \eqref{eq:wang-q}.
Here we give two such examples.
\begin{thm}\label{thm:main-3}
Let $n>3$ be an odd integer. Then, modulo $[n]\Phi_n(q)^3$,
\begin{align}
&\sum_{k=0}^{n-1}[3k+5]\frac{(q;q^2)_k (q^{5};q^2)_k^2}{(q;q)_k^2 (q^2;q^2)_k} q^{-(k^2+9k)/2} \notag\\[5pt]
&\quad\equiv [n]q^{(5-n)/2}+(1+q)q^3[n]^3+\frac{(1+q^3)q^4}{(1+q+q^2)^2}[n]^3+\frac{(n^2-1)(1-q)^2}{24}[n]^3q^{(5-n)/2}. \label{eq:guo-q2}
\end{align}
\end{thm}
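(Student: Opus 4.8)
The plan is to run the same $q$-WZ argument used for Theorems~\ref{thm:main-1} and~\ref{thm:main-2}, but now starting from the instance $k=2$ of the pair $F(n,k)$, $G(n,k)$ in \eqref{eq:fnk}--\eqref{eq:gnk}. Comparing powers of $q$ shows that the left-hand side of \eqref{eq:guo-q2} (with $n\mapsto m$) is exactly $q^2\sum_{n=0}^{m-1}F(n,2)$, since the exponent $-{n+1\choose 2}-2(2n+1)$ in $F(n,2)$ differs from $-(n^2+9n)/2$ by $-2$. Thus it suffices to evaluate $\sum_{n=0}^{m-1}F(n,2)$ and then multiply by $q^2$. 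Using \eqref{eq:fnk-gn0-00} at $k=1$ and at $k=2$ and adding the two instances gives the exact relation
\begin{align*}
\sum_{n=0}^{m-1}F(n,2)=\sum_{n=0}^{m-1}F(n,0)-G(m,1)-G(m,2).
\end{align*}
Here $\sum_{n=0}^{m-1}F(n,0)$ is handled by \eqref{eq:q-div} exactly as in \eqref{eq:reslut-2}, and $G(m,1)$ is already computed in \eqref{eq:mod-nnnphi}, so the only new work is the evaluation of $G(m,2)$.

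For $G(m,2)$ I would proceed in parallel with \eqref{eq:mod-nnnphi}. Using
\begin{align*}
(q^5;q^2)_{m-1}=\frac{1-q^{2m+1}}{(1-q)(1-q^3)}\,(q;q^2)_m
\end{align*}
together with $(q^2;q^2)_{m-1}=(q;q)_{m-1}(-q;q)_{m-1}$, the expression \eqref{eq:gnk} at $n=m$, $k=2$ becomes a constant times the cube $\bigl((q;q^2)_m/((1-q)(q;q)_{m-1})\bigr)^3$, which is congruent to $[m]^3$ modulo $[m]^3\Phi_m(q)$ by cubing \eqref{eq:mod-n}. The remaining prefactor is then reduced modulo $\Phi_m(q)$ via $q^m\equiv1$, the relation $(-q;q)_{m-1}\equiv1$ from \eqref{eq:q-Fermat}, and the factorization $1-q^3=(1-q)(1+q+q^2)$, yielding
\begin{align*}
G(m,2)\equiv-\frac{(1+q^3)q^2}{(1+q+q^2)^2}[m]^3\pmod{[m]^3\Phi_m(q)}.
\end{align*}
Substituting the three evaluations into the displayed identity and noting that $\gcd([m]\Phi_m(q)^3,[m]^3\Phi_m(q))=[m]\Phi_m(q)^3$, the combined congruence holds modulo $[m]\Phi_m(q)^3$; multiplying by $q^2$ then reproduces \eqref{eq:guo-q2} with $n\mapsto m$.

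The main obstacle is the evaluation of $G(m,2)$, specifically the factor $(1-q^3)^2=(1-q)^2(1+q+q^2)^2$ that appears in the denominator once the cube has been extracted. Reducing the prefactor modulo $\Phi_m(q)$ requires $1+q+q^2$ to be invertible there, which fails exactly when $m=3$, since then $1+q+q^2=\Phi_3(q)=\Phi_m(q)$. This is precisely why the hypothesis must be strengthened to $n>3$; for every other odd $m>1$ the polynomial $1+q+q^2$ is coprime to $\Phi_m(q)$, the reduction goes through, and the surviving denominator $(1+q+q^2)^2$ on the right-hand side of \eqref{eq:guo-q2} causes no difficulty.
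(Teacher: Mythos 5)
Your proposal is correct and follows essentially the same route as the paper: the identity $\sum_{n=0}^{m-1}F(n,2)=\sum_{n=0}^{m-1}F(n,0)-G(m,1)-G(m,2)$ obtained by telescoping \eqref{eq:fnk-gn0-00}, the evaluation of $G(m,2)$ via the factorization $(q^5;q^2)_{m-1}=(1-q^{2m+1})(q;q^2)_m/\bigl((1-q)(1-q^3)\bigr)$ together with \eqref{eq:q-Fermat} and \eqref{eq:mod-n}, and the final adjustment by the factor $q^2$ all match \eqref{eq;more-1}--\eqref{eq:mod-m2}. Your explicit remark that the hypothesis $n>3$ is forced by the need to invert $1+q+q^2=\Phi_3(q)$ modulo $\Phi_m(q)$ is a point the paper leaves implicit.
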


\begin{proof}Let $m>3$ be an odd integer. Then
\begin{align}
\sum_{n=0}^{m-1}F(n,2)=\sum_{k=0}^{m-1}[3k+5]\frac{(q;q^2)_k (q^{5};q^2)_k^2}{(q;q)_k^2 (q^2;q^2)_k} q^{-(k^2+9k)/2-2}.  \label{eq;more-1}
\end{align}
By \eqref{eq:fnk-gn0-00}, we get
\begin{align}
\sum_{n=0}^{m-1}F(n,2)= \sum_{n=0}^{m-1}F(n,0)-G(m,1)-G(m,2).\label{eq;more-2}
\end{align}
Moreover, in view of \eqref{eq:mod-n}, we have
\begin{align}
G(m,2) &=-\frac{(1+q^{m+3})(q;q^2)_{m}(q^5;q^2)_{m-1}^2 q^{-{m\choose 2}-4m+2} }{(1-q)(q;q)_{m-1}^3 (-q;q)_{m-1}}  \notag\\[5pt]
&=-\frac{(1-q^{2m+1})^2(1+q^{m+3})(q;q^2)_{m}^3 q^{-{m\choose 2}-4m+2} }{(1-q^3)^2(1-q)^3(q;q)_{m-1}^3 (-q;q)_{m-1}}   \notag \\[5pt]
&\equiv -\frac{(1+q^3)q^2}{(1+q+q^2)^2}[m]^3 \pmod{[m]\Phi_m(q)^3},
\label{eq:mod-m2}
\end{align}
Substituting \eqref{eq:mod-nnnphi}, \eqref{eq:reslut-2}, \eqref{eq;more-1} and \eqref{eq:mod-m2} into \eqref{eq;more-2},
we arrive at \eqref{eq:guo-q2} with $n\mapsto m$ differing only by a factor $q^{-2}$.
\end{proof}

\begin{thm}\label{thm:main-4}
Let $n>3$ be an odd integer. Then, modulo $[n]\Phi_n(q)^3$,
\begin{align}
&\sum_{k=0}^{n-1}[3k-3]\frac{(q;q^2)_k (q^{-3};q^2)_k^2}{(q;q)_k^2 (q^2;q^2)_k} q^{(7k-k^2)/2} \notag\\[5pt]
&\quad\equiv [n]q^{-(n+3)/2}-\frac{1+q}{q}[n]^3-\frac{1+q^3}{(1+q+q^2)^2}[n]^3+\frac{(n^2-1)(1-q)^2}{24}[n]^3q^{-(n+3)/2}. \label{eq:guo-q3}
\end{align}
\end{thm}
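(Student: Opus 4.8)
The plan is to mimic the proof of Theorem~\ref{thm:main-3} verbatim, reusing the same $q$-WZ pair $F(n,k)$, $G(n,k)$ from \eqref{eq:fnk} and \eqref{eq:gnk}, but now telescoping toward \emph{negative} values of the second argument. Fixing an odd integer $m>3$ and specializing \eqref{eq:fnk} at $k=-2$ (where $3n+2k+1=3n-3$, $2k+1=-3$, and the $q$-power exponent becomes $-\binom{n+1}{2}+4n+2=(7n-n^2)/2+2$), one finds
\begin{align*}
\sum_{n=0}^{m-1}F(n,-2)=q^2\sum_{k=0}^{m-1}[3k-3]\frac{(q;q^2)_k(q^{-3};q^2)_k^2}{(q;q)_k^2(q^2;q^2)_k}q^{(7k-k^2)/2},
\end{align*}
so the left-hand side of \eqref{eq:guo-q3} (with $n\mapsto m$) is exactly $q^{-2}\sum_{n=0}^{m-1}F(n,-2)$. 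Thus the whole task reduces to evaluating this sum modulo $[m]\Phi_m(q)^3$ and dividing by $q^2$.

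First I would iterate the summed WZ relation \eqref{eq:fnk-gn0-00} at $k=0$ and $k=-1$. These give $\sum_{n=0}^{m-1}F(n,-1)=\sum_{n=0}^{m-1}F(n,0)+G(m,0)$ and $\sum_{n=0}^{m-1}F(n,-2)=\sum_{n=0}^{m-1}F(n,-1)+G(m,-1)$, whence
\begin{align*}
\sum_{n=0}^{m-1}F(n,-2)=\sum_{n=0}^{m-1}F(n,0)+G(m,0)+G(m,-1).
\end{align*}
Of the three ingredients on the right, the first is supplied directly by \eqref{eq:reslut-2}, and the second by the already-computed evaluation \eqref{eq:mod-nnnphi-2}, namely $G(m,0)\equiv-(1+q^{-1})q^2[m]^3$. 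Note that since $[m]^3\Phi_m(q)$ is a multiple of $[m]\Phi_m(q)^3$, this congruence persists modulo the target modulus $[m]\Phi_m(q)^3$.

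The one genuinely new computation is the evaluation of $G(m,-1)$, and I expect this to be the main obstacle. Setting $k=-1$ in \eqref{eq:gnk} and using $(q^{-1};q^2)_{m-1}=(1-q^{-1})(q;q^2)_{m-1}/(1-q^{2m-3})$ together with $(q;q^2)_{m-1}=(q;q^2)_m/(1-q^{2m-1})$ and $(q^2;q^2)_{m-1}=(q;q)_{m-1}(-q;q)_{m-1}$, one can factor out the block $(q;q^2)_m^3/[(1-q)^3(q;q)_{m-1}^3]$, which by cubing \eqref{eq:mod-n} is congruent to $[m]^3$ modulo $[m]^3\Phi_m(q)$. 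The remaining prefactor is then reduced modulo $\Phi_m(q)$ using $q^m\equiv1$, the relation \eqref{eq:q-Fermat} for $(-q;q)_{m-1}$, the fact that $q^{-\binom{m}{2}}\equiv1$ for odd $m$ (so that the power $q^{-\binom{m}{2}+2m-1}\equiv q^{-1}$), and the factorization $(1-q^{-3})^2=q^{-6}(1-q)^2(1+q+q^2)^2$. A short simplification should collapse the prefactor to $-(1+q^3)q^2/(1+q+q^2)^2$, giving
\begin{align*}
G(m,-1)\equiv-\frac{(1+q^3)q^2}{(1+q+q^2)^2}[m]^3\pmod{[m]\Phi_m(q)^3}.
\end{align*}

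Finally, substituting these three evaluations into the displayed identity for $\sum_{n=0}^{m-1}F(n,-2)$, dividing through by $q^2$, and using $q^{(1-m)/2-2}=q^{-(m+3)/2}$ together with $1+q^{-1}=(1+q)/q$ will yield \eqref{eq:guo-q3} with $n\mapsto m$. The only subtlety to keep in view throughout is exactly the one already flagged for $G(m,0)$: each $G$-evaluation is naturally obtained modulo $[m]^3\Phi_m(q)$, and one must record that this implies the congruence modulo $[m]\Phi_m(q)^3$, which is the modulus in which the three pieces are ultimately combined.
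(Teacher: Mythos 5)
Your proposal is correct and follows exactly the route of the paper's own (very terse) proof: the same telescoping identity $\sum_{n=0}^{m-1}F(n,-2)=\sum_{n=0}^{m-1}F(n,0)+G(m,0)+G(m,-1)$, the same reuse of \eqref{eq:reslut-2} and \eqref{eq:mod-nnnphi-2}, and the same evaluation $G(m,-1)\equiv-(1+q^3)q^2[m]^3/(1+q+q^2)^2$. Your computations (including the exponent bookkeeping and the observation that congruences modulo $[m]^3\Phi_m(q)$ imply those modulo $[m]\Phi_m(q)^3$) check out and in fact supply more detail than the paper does.
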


\begin{proof}The proof is similar to that of Theorem \ref{thm:main-3}. This time we need to use
\begin{align*}
\sum_{n=0}^{m-1}F(n,-2)= \sum_{n=0}^{m-1}F(n,0)+G(m,0)+G(m,-1),
\end{align*}
and
\begin{align*}
G(m,-1) &=-\frac{(1+q^{m-3})(q;q^2)_{m}(q^{-1};q^2)_{m-1}^2 q^{-{m\choose 2}+2m-1} }{(1-q)(q;q)_{m-1}^3 (-q;q)_{m-1}}  \notag\\[5pt]
&\equiv -\frac{(1+q^3)q^2}{(1+q+q^2)^2}[m]^3 \pmod{[m]\Phi_m(q)^3}.
\end{align*}
\end{proof}

\section{An open problem}
For Sun's divisibility result \eqref{eq:sunby}, we found that the following stronger version holds:
$$
4n{2n\choose n}\Bigg| \sum_{k=0}^{n-1} \frac{6k^4}{2k-1}{2k\choose k}^316^{n-k-1}.
$$
Furthermore, we believe that the following $q$-version should be true.
\begin{conj}Let $n$ be a positive integer. Then
\begin{align*}
\frac{1}{(1+q)^3[2n+1]{2n\brack n}}{}\sum_{k=1}^{n}\frac{[3k][2k][k]^2(-q;q)_n^4}{[2k-1](-q;q)_k^4}{2k\brack k}^3 q^{-(k^2+3k)/2}
\end{align*}
is a Laurent polynomial in $q$.
\end{conj}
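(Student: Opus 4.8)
The plan is to recast the conjecture as the divisibility statement that the Laurent polynomial
\[
S_n:=\sum_{k=1}^{n}\frac{[3k][2k][k]^2(-q;q)_n^4}{[2k-1](-q;q)_k^4}{2k\brack k}^3 q^{-(k^2+3k)/2}
\]
is divisible by $D_n:=(1+q)^3[2n+1]{2n\brack n}$. Since $q$ is a unit in the Laurent ring, divisibility by $D_n$ reduces to divisibility in $\mathbb{Z}[q]$, where the $\Phi_d(q)$ are pairwise coprime monic irreducibles; so it is equivalent to showing, for each $d$, that $S_n$ vanishes at a primitive $d$-th root of unity to the order with which $\Phi_d$ occurs in $D_n$. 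A quick bookkeeping of these orders shows that the only demanding cases are $\Phi_2$, whose multiplicity in $D_n$ is $3+(n\bmod 2)$, and the odd $\Phi_d$ with $d\mid 2n+1$, whose multiplicity can reach $2$; everywhere else the factor is simple. Note that the conjecture is asserted for all positive $n$, so, in contrast to \eqref{eq:q-div}, the argument must also cover even $n$.

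My main tool would be the very $q$-WZ pair \eqref{eq:fnk}--\eqref{eq:gnk} already used above. By \eqref{eq:reslut-1} one has $S_n=q^{-1}(-q;q)_n^4\sum_{j=0}^{n-1}F(j,1)$, and since passing from $n$ to $n+1$ adjoins the single term $F(n,1)$ while multiplying the prefactor by $(1+q^{n+1})^4$, this yields the exact recurrence
\[
S_{n+1}=(1+q^{n+1})^4S_n+q^{-1}(-q;q)_{n+1}^4F(n,1).
\]
The virtue of this identity is that the inhomogeneous term collapses to a single factored monomial: using $(-q;q)_n(q;q^2)_n/(q;q)_n={2n\brack n}$, $(1-q^{2n+1})^2/(1-q)^2=[2n+1]^2$ and $[3n+3]=(1+q^{n+1}+q^{2n+2})[n+1]$, one finds
\[
q^{-1}(-q;q)_{n+1}^4F(n,1)=(1+q^{n+1})^4(1+q^{n+1}+q^{2n+2})[n+1][2n+1]^2{2n\brack n}^3\,q^{-\binom{n+1}{2}-2n-2}.
\]
I would then attempt to prove $D_n\mid S_n$ by induction on $n$, the base $n=1$ being the identity $S_1=[2]^4[3]q^{-2}=D_1q^{-2}$. (It is worth noting that $\sum_{j}F(j,1)$ does not appear to admit a closed product evaluation — already at $n=3$ the reduced numerator acquires a factor that is not a product of $q$-integers — so one cannot simply sum the series and read off the divisibility.)

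For the inductive step write $S_n=D_nT_n$ with $T_n$ a Laurent polynomial. A short computation gives $D_{n+1}=D_n\,[2n+3](1+q^{n+1})/[n+1]$, so the excess factor carries $[n+1]$ in its denominator; consequently neither summand of the recurrence is by itself divisible by $D_{n+1}$, and dividing through one obtains
\[
\frac{S_{n+1}}{D_{n+1}}=\frac{(1+q^{n+1})^3[n+1]}{[2n+3]}\,T_n+\frac{q^{-1}(-q;q)_{n+1}^4F(n,1)}{D_{n+1}},
\]
in which both terms share the coprime factor $[2n+3]$ in their denominators. Hence the heart of the induction is to prove that these two poles cancel, that is, that $S_{n+1}\equiv 0$ modulo each $\Phi_d(q)$ with $d\mid 2n+3$ to the required order — a genuine $q$-congruence rather than a formal consequence of the recurrence.

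This last point is where I expect the real difficulty to lie. The high multiplicities — $\Phi_2^3$, and $\Phi_2^4$ for odd $n$, together with $\Phi_d^2$ for odd $d\mid 2n+1$ — force one to control not merely the values but the second and (at $q=-1$) third and fourth derivatives of $S_n$ at the relevant roots of unity, and plain evaluation will not reach these orders. I would therefore invoke the creative microscoping method of \cite{GuoZu}: introduce an auxiliary parameter $a$ that resolves a repeated factor $\Phi_d^{\,e}$ into $e$ distinct simple factors, prove the corresponding parametric divisibility (most plausibly a parametric refinement of the evaluation behind \eqref{eq:q-div} combined with the explicit closed form of $G(m,1)$ recorded in \eqref{eq:mod-nnnphi}), and then recover the repeated factor through the confluent limit $a\to1$. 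The truly delicate part is the cube $(1+q)^3$ at $q=-1$: producing a third, and for odd $n$ a fourth, order of vanishing there seems to require a parametric WZ companion engineered to generate the factor $(1+q)^3$, which is input not supplied by the single pair $F,G$ above. This missing ingredient is, I believe, exactly why the statement is left as a conjecture.
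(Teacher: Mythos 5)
This statement is posed in the paper as an open problem --- Section 6 offers no proof of it, so there is nothing in the paper to compare your argument against. Judged on its own terms, your proposal is not a proof either, and you say as much at the end. The preliminary reductions you carry out are correct and worthwhile: the identification $S_n=q^{-1}(-q;q)_n^4\sum_{j=0}^{n-1}F(j,1)$ via \eqref{eq:reslut-1}, the recurrence $S_{n+1}=(1+q^{n+1})^4S_n+q^{-1}(-q;q)_{n+1}^4F(n,1)$, the factored closed form of the inhomogeneous term (I checked it using $(q^3;q^2)_n=[2n+1](q;q^2)_n$ and $(q;q^2)_n^3/\bigl((q;q)_n^2(q^2;q^2)_n\bigr)={2n\brack n}^3/(-q;q)_n^4$), the base case $S_1=D_1q^{-2}$, and the ratio $D_{n+1}/D_n=[2n+3](1+q^{n+1})/[n+1]$ are all right. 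But the genuine content of the conjecture is precisely the step you defer: proving that $S_{n+1}$ vanishes to order $1$ at the primitive $d$-th roots of unity for $d\mid 2n+3$, and to order $3$ (order $4$ for odd $n$) at $q=-1$. The recurrence gives no leverage on the new factor $[2n+3]$, since neither summand is individually divisible by it, and the single $q$-WZ pair \eqref{eq:fnk}--\eqref{eq:gnk} together with \eqref{eq:q-div} controls behaviour only modulo $[n]\Phi_n(q)^3$ for odd $n$ at odd-order roots of unity; it says nothing about $q=-1$, where the cube $(1+q)^3$ lives. Your appeal to creative microscoping is a plausible direction but is not an argument --- no parametric identity is produced, and no mechanism generating the factor $(1+q)^3$ is exhibited.

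One concrete error in your bookkeeping: for odd $d>1$ with $d\mid 2n+1$ the multiplicity of $\Phi_d(q)$ in $D_n$ is exactly $1$, not ``up to $2$''. Indeed the multiplicity of $\Phi_d$ in ${2n\brack n}$ is $\lfloor 2n/d\rfloor-2\lfloor n/d\rfloor\in\{0,1\}$, and when $d\mid 2n+1$ one computes $\lfloor 2n/d\rfloor=2\lfloor n/d\rfloor$, so ${2n\brack n}$ contributes nothing and only the simple factor from $[2n+1]$ remains. Thus the only repeated cyclotomic factor of $D_n$ is $\Phi_2$, with multiplicity $3+(n\bmod 2)$ as you say. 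This actually makes the target slightly easier than you describe for odd $d$, but it does not close the gap at $q=-1$, which remains the heart of the matter and is exactly why the statement is left as a conjecture.
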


\vskip 2mm \noindent{\bf Acknowledgments.}
This work was partially supported by the National Natural Science Foundation of China (grant 11771175).

\end{document}